\numberwithin{equation}{section}
\newtheorem{Equation}{}[section]
\newtheorem{theorem}[Equation]{Theorem}
\newtheorem{lemma}[Equation]{Lemma}
\newtheorem{corollary}[Equation]{Corollary}
\newtheorem{definition}[Equation]{Definition}
\newtheorem{remark}[Equation]{Remark}
\definecolor{darkgreen}{cmyk}{1,0,1,.2}
\definecolor{m}{rgb}{1,0.1,1}
\definecolor{green}{cmyk}{1,0,1,0}
\definecolor{darkred}{rgb}{0.55, 0.0, 0.0}
\definecolor{test}{rgb}{1,0,0}
\definecolor{cmyk}{cmyk}{0,1,1,0}
\def\R{\mathbb R}
\newcommand{\floor}[1]{\lfloor #1 \rfloor}
\begin{document}
\markboth{Mahashweta Patra et al.}{Shilnikov-type Dynamics in Three-Dimensional Piecewise Smooth Maps}
\title{Shilnikov-type Dynamics in Three-Dimensional Piecewise Smooth Maps}
\author{Indrava Roy\\Institute of Mathematical Sciences, {HBNI}, Chennai, India\\\textit{Email address}:\texttt{indrava@imsc.res.in},\and
Mahashweta Patra \\{Indian Institute of Science Education and Research, Kolkata, India}\\\textit{Email address}: \texttt{mp12ip005@iiserkol.ac.in},  \and Soumitro Banerjee \\{Indian Institute of Science Education and Research, Kolkata, India} \\\textit{Email address}:\texttt{soumitro@iiserkol.ac.in}
}

\maketitle

\begin{abstract}
We show the existence of Shilnikov-type dynamics and bifurcation behaviour in general discrete three-dimensional piecewise smooth maps and give analytical results
for the occurence of such dynamical behaviour. Our main example in fact shows a `two-sided' Shilnikov dynamics, i.e. simultaneous looping and homoclinic intersection
of the one-dimensional eigenmanfolds of fixed points on both sides of the border. We also present two complementary methods to analyse the return time of an orbit to the border:
one based on recursion and another based on complex interpolation.
\end{abstract}

\section{Introduction}
The phenomenon of chaos is observed in many nonlinear deterministic systems in
both experimental and computer-simulation contexts. In 1965, L. Shilnikov (also written {\v{S}}il'nikov)
\cite{Shilnikov1} showed that in a continuous-time dynamical system if the real
eigenvalue has a larger magnitude than the real part of the complex conjugate
eigenvalues, then there are horseshoes present in the return maps defined near the
homoclinic orbit. Shilnikov chaos normally appears when a parameter is
varied towards the homoclinic condition associated with a saddle focus. The
criteria is called Shilnikov criteria and orbit is called Shilnikov chaos.

 Shilnikov attractors can be found in many different systems, including
Ro\"ssler system, Arenodo-Coullet-Tresser systems, and Rosenzweig-Mac-Arthur
system. Shilnikov chaos is observed in many physical systems, including a
single mode lasers with feedback (\cite{arecchi1987laser}) and with a saturable
absorber (\cite{de1989instabilities}), the Belousov-Zhabotinskii reaction, a
glow discharge plasma (\cite{braun1992evidence}), an optically bistable device
(\cite{pisarchik2000discrete}), a multimode laser (\cite{viktorov1995shil}) and
some other systems. Theoretical and experimental study of discrete behavior of
Shilnikov chaos is shown in a $CO_2$ laser in
\cite{pisarchik2001theoretical}.

 Attractors of a spiral type that appear in accordance with this scenario in
discrete dynamical systems are called \textit{discrete Shilnikov attractors}. First examples of such attractors were found in 3-D generalized
H\'{e}non map. \cite{pisarchik2000discrete} have reported on the first experimental
observation of the discrete behavior of Shilnikov chaos.

 \cite{zhou2004constructing} have shown that there only exist three kinds of
chaos- homoclinic chaos, heteroclinic chaos, and a combination of homoclinic and
heteroclinic chaos. They construct a new chaotic system of quadratic polynomial
ordinary differential equations (ODE) in three dimensions, which has a single
equilibrium point. They rigorously prove that this system satisfies all
conditions stated in the Shilnikov theorem (\cite{tresser1984some}), which
clearly reveals its chaos formation mechanism and implies the existence of Smale
horseshoes.

 \cite{silva1993shil} has given a brief introduction of Shilnikov's method
to detect analytically the presence of chaos in continuous autonomous systems.
As an application to a piecewise linear system they took Chua's circuit and have
shown homoclinic orbit from a saddle focus as well as heteroclinic orbit between
two saddle foci.

The mechanism of formation of Shilnikov chaos has not yet been investigated in a
piecewise smooth discrete dynamical system. The questions we address in this
paper are- what are the criteria for a Shilnikov chaos to occur in a
piecewise smooth discrete-time dynamical system? What sort of theoretical conditions can be given that can guarantee existence or non-existence of
transverse homoclinic intersections, which will help create more efficient computer programs that can search for such intersections?


Since fixed-points of saddle-focus type do not appear in a 2-D piecewise linear map, to investigate the Shilnikov phenomenon in a piecewise smooth system we take a 3-D piecewise linear normal
form map (\cite{indrava, 3dijbc, patra2017bifurcation}) and answer the questions mentioned above. We derive an analytical condition for the occurrence of a
homoclinic intersection, thereby, the occurrence of a chaotic orbit. This condition is obtained keeping in view that checking the existence of a homoclinic intersection requires
a computer simulation; the implementation of the algorithm is rendered more convenient by our analytic condition.
In particular, it provides a finite range of iterations which should be checked, outside of which no homoclinic intersection is possible. We also show
a motivating numerical example which exhibits Shilnikov-type behaviour, and show the utility of our methods with the example. The two-sided Shilnikov dynamics, (i.e. Shilnikov-type
behaviour exhibited simultaneously by fixed points on both sides of the border) that this example shows is
an interesting feature that occurs in three-dimensional piecewise smooth systems and may well be unique to them.

The paper is divided as follows. The description of the normal form for a three-dimensional piecewise smooth system is given in
Section (\ref{map_description}). A motivating example is shown for a two-sided Shilnikov-type dynamics in Section (\ref{example}). Section (\ref{theory}) is the
main theoretical part of the paper which gives two complementary methods to analyse the existence of transverse homoclinic intersections. The last section contains discussions of the
results obtained and concluding remarks.

{\em{ Acknowledgements.}}
IR thanks A. Prasad for reading a preliminary draft of the paper,
and the Indian Science and Engineering Research Board (SERB) for support via MATRICS project MTR/2017/000835.

\section{3-Dimensional piecewise smooth maps: system description}\label{map_description}
The piecewise linear approximation of a general piecewise smooth 3D system evaluated in a close neighborhood of the border, called the `normal form' map
 \cite{mario-iscas03, indrava, 3dijbc, patra2017bifurcation} is given by
\begin{equation}
X_{n+1}=F_{\mu}(X_n)=
\begin{cases}
A_lX_n+\mu C, &\text{if $x_n\leq 0$}\\
A_rX_n+\mu C, &\text{if $x_n\geq 0$}
\end{cases}
\label{normalform}
\end{equation}
where $X_n=(x_n, y_n, z_n)^\intercal\in \mathbb{R}^3$, $C = (1, 0, 0)^\intercal \in \mathbb{R}^3$ and $\mu$ is a real-valued parameter.
The phase space of this map is divided by the `border' $X_b: \{x=0\}$
into two regions $\mathcal{L}:= \{(x, y, z) \in \mathbb{R}^3 : x\leq
  0\}$ and $\mathcal{R} := \{(x, y, z) \in \mathbb{R}^3 : x \geq 0\}$. We shall frequently refer to $\mathcal{L}$ and $\mathcal{R}$ as the `left' side and the `right' side of the border, respectively.
  In
each region, the dynamics is governed by an affine map and the
equations are continuous across $X_b$.
$A_l$ and $A_r$ are real valued $3\times 3$ matrices
\[ A_l=\left( \begin{array}{ccc}
\tau_l & 1 & 0 \\
-\sigma_l & 0 & 1 \\
\delta_l & 0 & 0 \end{array} \right)
\;\;\; \mbox{and} \;\;\; A_r=\left( \begin{array}{ccc}
\tau_r & 1 & 0 \\
-\sigma_r & 0 & 1 \\
\delta_r & 0 & 0 \end{array} \right)
\]

If $\lambda_1$, $\lambda_2$ and
$\lambda_3$ are the eigenvalues of the Jacobian matrix of the original PWS map
evaluated at a fixed point placed on the left side close to the border,
 then the
parameters of the matrix $A_l$ are simply the trace $\tau_l=\lambda_1
+ \lambda_2 + \lambda_3$, the second trace
$\sigma_l=\lambda_1\lambda_2 + \lambda_2 \lambda_3 + \lambda_3
\lambda_1$ and the determinant $\delta_l= \lambda_1
\lambda_2\lambda_3$. The parameters of the matrix $A_r$ depends, in a
similar manner, on the eigenvalues of the Jacobian matrix computed at
a fixed point located on the right side.

The fixed points of the system in both sides of the boundary are given by

\begin{eqnarray}\nonumber
L^{*}&=&(\frac{\mu}{1-\tau_l+\sigma_l-\delta_l}, \frac{\mu(-\sigma_l+\delta_l)}{1-\tau_l+\sigma_l-\delta_l}, \frac{\mu\delta_l}{1-\tau_l+\sigma_l-\delta_l})\\ \nonumber
R^{*}&=&(\frac{\mu}{1-\tau_r+\sigma_r-\delta_r}, \frac{\mu(-\sigma_r+\delta_r)}{1-\tau_r+\sigma_r-\delta_r}, \frac{\mu\delta_r}{1-\tau_r+\sigma_r-\delta_r})\\ \nonumber
\end{eqnarray}

If $L^*$ lies on the left side of the border, it is called \emph{admissible}, otherwise it is called \emph{virtual}. Similarly, $R^*$ is admissible if it lies on the right side of the border 
and virtual otherwise. We will assume the generic condition $\mu\neq 0$ for our discussions. 

\section{Motivating example for Shilnikov-type dynamics and bifurcation scenario} \label{example}

Let us consider the system given by Equation (\ref{normalform}) for the following parameter values:

$$
\tau_l = 1.0, \quad  \sigma_l = -0.25, \quad \delta_l = 0.3,\quad
\tau_r = 0.58,\quad \sigma_r = 0.38, \quad \delta_r = -1.27, \quad \mu=1.0
$$

For these values, the left fixed point is admissible, located at the point $L^*=(−1.8182, −1, −0.5455)$. The right fixed point is also admissible and located at $(0.4878, −0.8049, −0.6195)$.

The matrix $A_l$ has a positive unstable eigenvalue $\lambda_{1, l}= 1.3499$, and a pair of stable complex eigenvalues $\lambda_{2,l}, \overline{\lambda}_{2,l}=−0.1749 \pm 0.4378i$ with absolute
value $|\lambda_{2,l}| = 0.4714$. Therefore the left fixed point of saddle-focus type. The matrix $A_r$ governing the dynamics on the right side of the border $x=0$,
has a negative stable eigenvalue $\lambda_{1,r}= -0.8211$ and pair of unstable complex eigenvalues $\lambda_{2,r}, \overline{\lambda}_{2,r}= 0.7105 \pm 1.0207i$ with absolute value $1.2437$. Therefore,
$R^*$ is a flip saddle.

At this setting of parameter values, there exists a transverse homooclinic intersection between the 1-dimensional stable manifold $S_r$ and the 2-dimensional unstable plane $U_r$ of $R^*$ 
(see Figure (\ref{Homoclinicintersection} (d)), resulting in
a chaotic attractor which is stable under small perturbations for $\tau_r< 0.62$, see Figure (\ref{Homoclinicintersection} (a)). The Lyapunov exponents and bifurcation diagrams are shown in Figure
(\ref{LyapunovBifurcation}).

\begin{figure}
\centering
    \begin{subfigure}[]
       \centering
       \includegraphics[scale=.45]{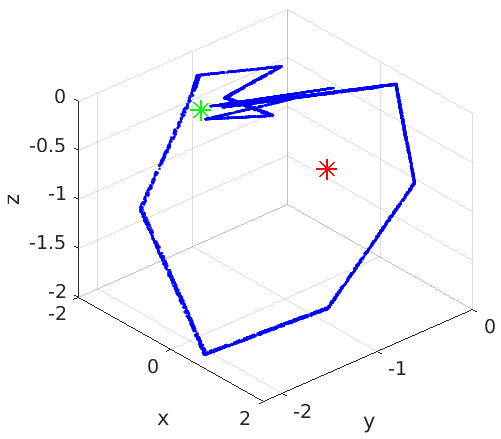}
       \label{attractor}
    \end{subfigure}
    \begin{subfigure}[]
       \centering
       \includegraphics[scale=.45]{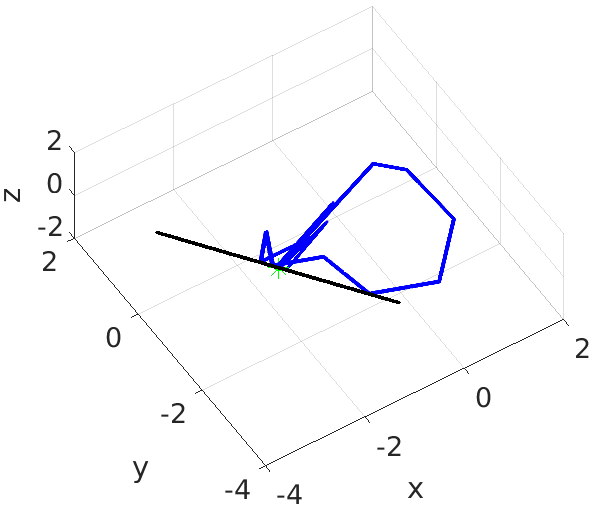}
    \end{subfigure}
    \begin{subfigure}[]
      \centering
      \includegraphics[scale=.45]{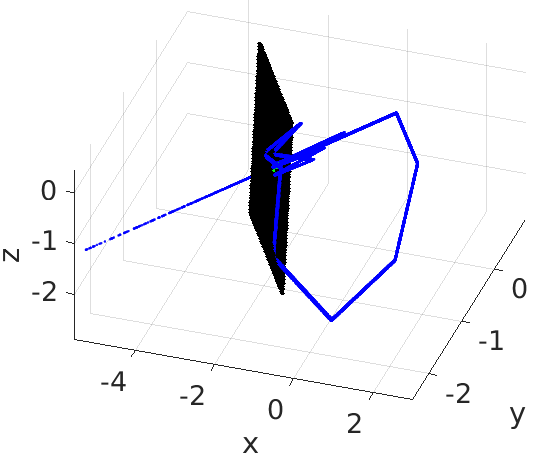}
    \end{subfigure}
     \begin{subfigure}[]
      \centering
      \includegraphics[scale=.45]{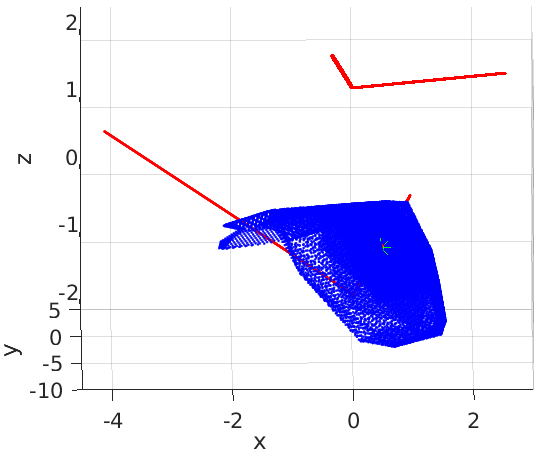}
      \label{UR_SR}
    \end{subfigure}
\caption{(a) Chaotic attractor; (b), (c) Shilnikov-type dynamics and transverse homoclinic intersection between $S_l$(black) and $U_l$(blue) at (b) $\tau_r=0.58$ and (c) $\tau_r =0.62$.
(d) The homoclinic intersection of $U_r$(blue) and $S_r$(red). The rest of the parameters are
the same as in the beginning of Section (\ref{example}); the green and red stars show the location of $L^*$ and $R^*$, respectively.}
\label{Homoclinicintersection}
\end{figure}

We also observe that the unstable manifold $U_l$ for the left fixed point $L^*$ grows until hits the border (say at $B^*_l$) and passes to the right side, and then loops back towards the left side due to the unstable
nature of $R^*$. When $U_l$  crosses the border again, the first iterate that crosses the border lies ``above'' the stable manifold $S_l$, i.e. in the same side of $B^*_l$ with respect to 
$S_l$.
Thus the dynamics mimics the classical Shilnikov looping behaviour of the unstable manifold $U_l$, and a Shilnikov-type bifurcation
occurs for these parameter values when $\tau_r$ increases from $0.58$ to $0.62$. At the critical value $\tau_r=0.62$, the unstable manifold $U_l$ has a transverse homoclinic
intersection with the unstable plane $S_l$, which results in the sudden vanishing of the chaotic attractor (see Figure (\ref{Homoclinicintersection}(c))). Note that due to the transverse intersection of $S_r$ and $U_r$,
chaotic dynamics is still present in the system, but there is no attractor. The basin of attraction is plotted in the planes $z= const.$ in Figure (\ref{Basinsofattraction}), for the values $z=-1.1, 0$ and $1.0$ at
$\tau_r=0.58$ and $\tau_r=0.63$.

\begin{figure}
\centering

    \begin{subfigure}[]
       \centering
       \includegraphics[scale=.35]{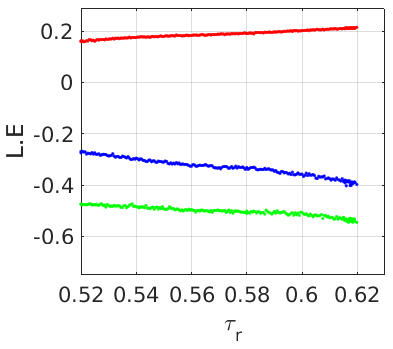}
    \end{subfigure}
    \begin{subfigure}[]
      \centering
      \includegraphics[scale=.35]{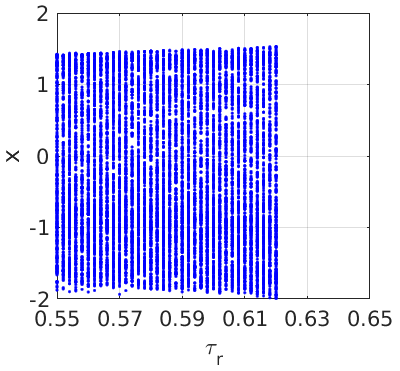}
    \end{subfigure}
\caption{(a). Lyapunov exponent plot, and (b) Bifurcation diagram for the same parameter values as in the beginning of Section (\ref{example}).}
\label{LyapunovBifurcation}
\end{figure}

On the other hand, increasing $\tau_L$ from 1 to $1.05$ causes the homoclinic intersection of $U_r$ and $S_r$ to be broken, and thus the chaotic dynamics also ceases to occur. Thus near the critical
values $\tau_r =0.62$ and $\tau_L=1.05$, a two-sided Shilnikov-type dynamics and bifurcation occurs. This kind of dynamics is most likely unique to piecewise smooth systems due to its inherent
asymmetry.

\begin{figure}
\centering
    \begin{subfigure}[]
       \centering
       \includegraphics[scale=.3]{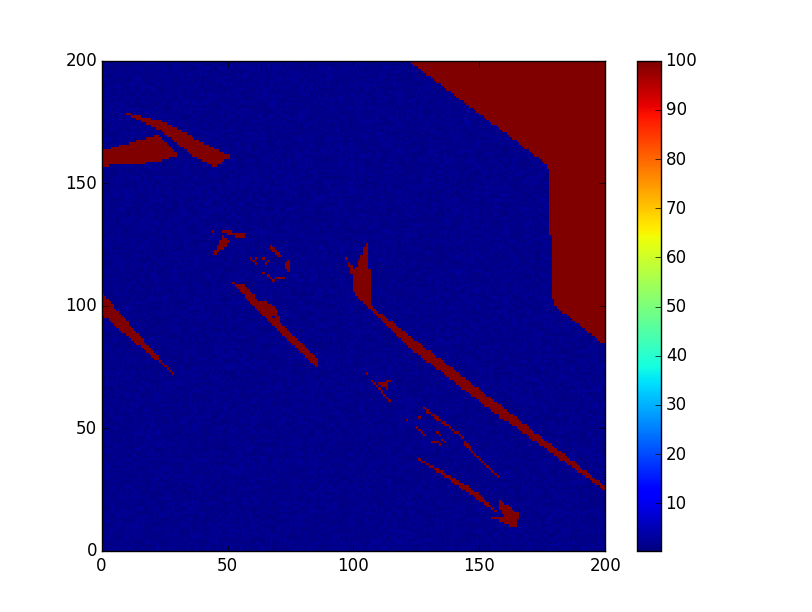}
    \end{subfigure}
    \begin{subfigure}[]
      \centering
      \includegraphics[scale=.3]{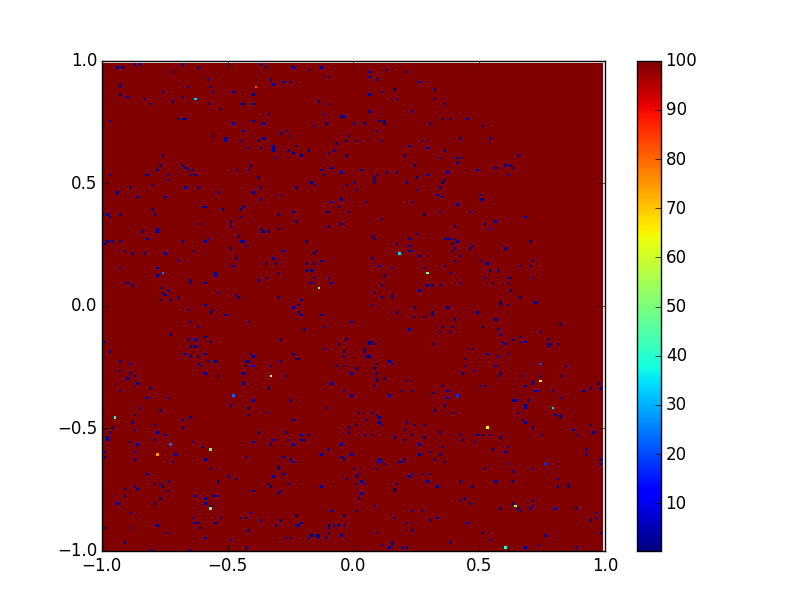}
    \end{subfigure}
    \begin{subfigure}[]
       \centering
       \includegraphics[scale=.3]{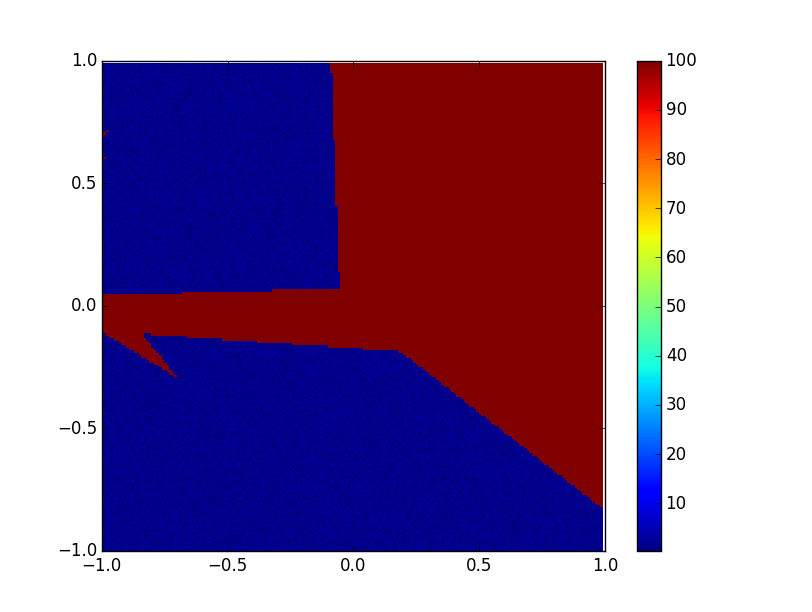}
    \end{subfigure}
    \begin{subfigure}[]
      \centering
      \includegraphics[scale=.3]{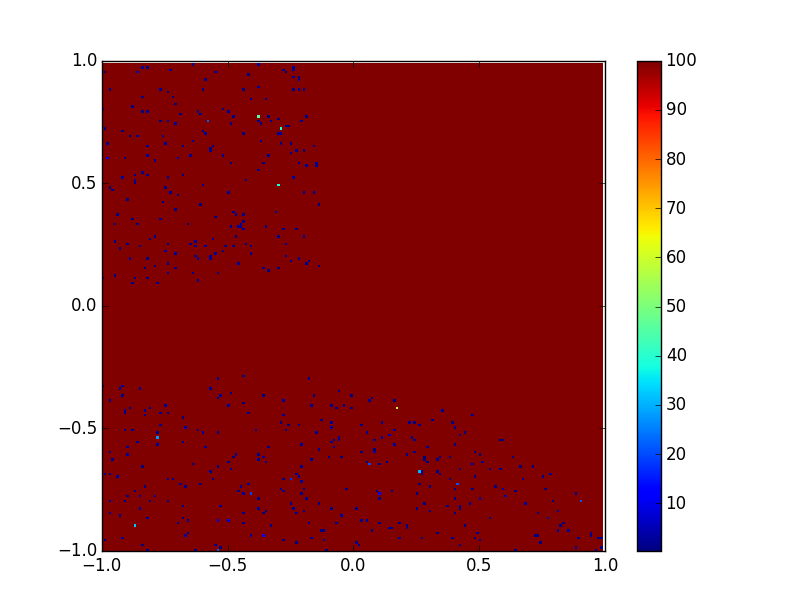}
    \end{subfigure}
    \begin{subfigure}[]
       \centering
       \includegraphics[scale=.3]{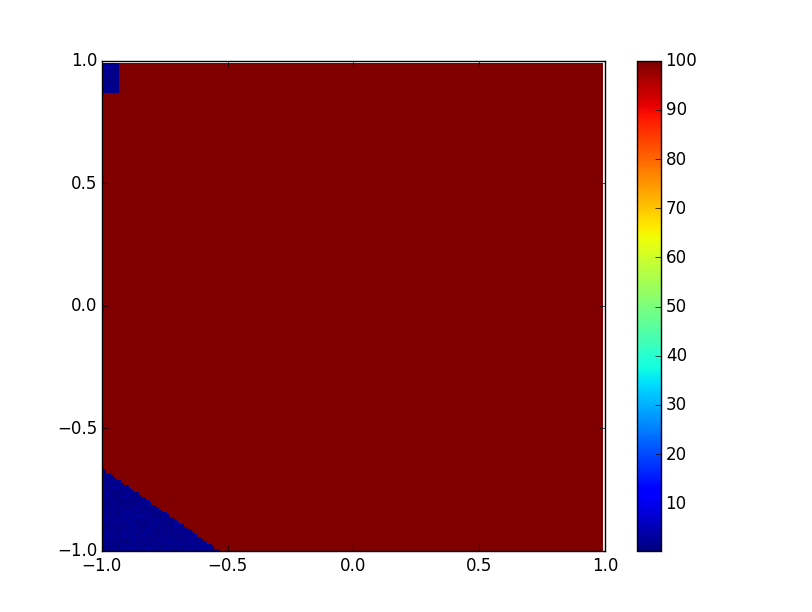}
    \end{subfigure}
    \begin{subfigure}[]
      \centering
      \includegraphics[scale=.3]{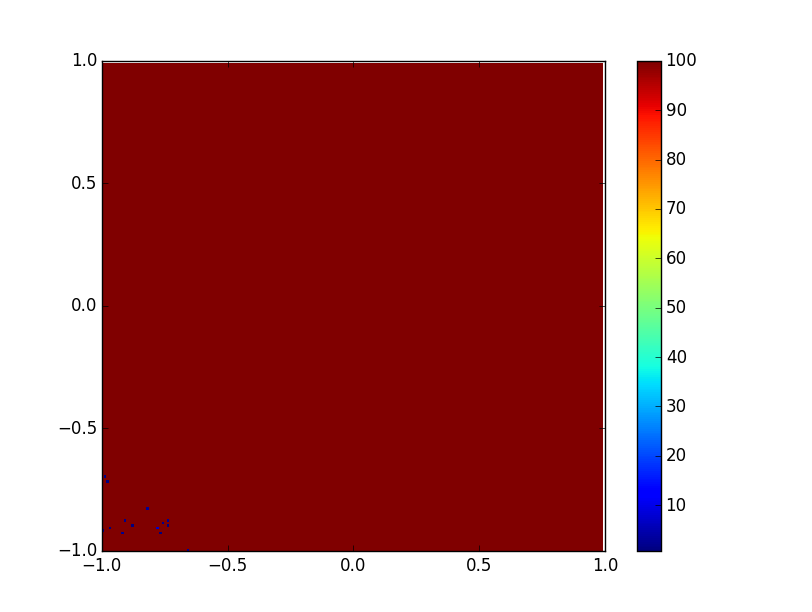}
    \end{subfigure}
\caption{(a), (c), (e). Basin of attraction for the planes $z=-1.1, 0, 1.1$ and $\tau_r=0.58$, (b), (d), (f) Basin of attraction for the planes $z=-1.1, 0, 1.1$ and $\tau_r=0.63$.
The $x$-coordinate is plotted in the horizontal direction and the $y$-coordinate in the vertical direction, both lying in the interval $[-1,1]$.
color bar shows the vector norm of the orbit point after 1000 iterations, with the ceiling at the value 100. Other parameter values are same as in the beginning of Section (\ref{example}).}
\label{Basinsofattraction}
\end{figure}

\section{Analytical conditions for homoclinic intersection} \label{theory}
A homoclinic intersection between stable and unstable manifold implies an infinite number of intersections, therefore a horseshoe structure is born.
We can give a general condition for occurrence chaos through the occurrence of homoclinic intersection.

Let $L^*$ be an admissible fixed point which has an unstable eigenvalue and a pair of stable eigenvalues. The procedure for finding the homoclinic intersection is as follows:

\begin{enumerate}

\item Calculate the unstable eigenvector, calculate the point $P_0$ where it touches the border. Consider that point as the initial point.

\item We calculate the $n$-th iteration of this initial point i.e., $P_n$  where $n$ is the minimum number to cross the border again.

\item Calculate $P_{n+1}$.

\item As the equation of the stable eigenplane is known we can check whether $P_{n+1}$ and $P_n$ are on the same side of the stable eigenplane equation or not.

\item Calculate the intersection between the stable eigenplane and the line going through the $P_n$ and $P_{n+1}$. Check whether the intersection point is in the same side of the fixed point or not.

If the intersection point is on the same side of the fixed point then there is a homoclinic intersection, and therefore, chaotic dynamics must occur.

\end{enumerate}

Here, to calculate the $P_n$, i.e., the $n$-th iteration, we have taken two complementary approaches. The first one is based on a recursion method following \cite{avrutin2016dangerous} (see also\\
\cite{saha2015}). The recursion stops as soon as the orbit crosses the border, at which point it is easily checked whether there exists a transverse homoclinic intersection with the 2-dimensional
stable manifold of $L^*$.

The second method is then used to provide upper bounds for the number of iterations that need to be checked for the border crossing and subsequent homoclinic intersection. This method is based
on a complex interpolation scheme whereby we define fractional iterations of our system in Equation (\ref{normalform}). This interpolation is then used to get a transcendental
equation in the positive reals; the least positive solution of this equation then provides the border return time.

\subsection{Recursive method to compute powers of a matrix}\label{rec}
Let's consider a situation where a period-1 fixed point $L^*= (x_l, y_l, z_l)$ is admissible. The unstable eigenvector grows and touches the $x=0$ plane at $P_0=(0,C_1,C_2)$ and the image of this point is the
1st fold point and its coordinate is $P_1=(x_1,y_1,z_1)=(C_1+\mu, C_2, 0)$. where, $C_1=y_l-\frac{v_{21}}{v_{11}}x_l$, and $C_2=z_l-\frac{v_{31}}{v_{11}}x_l$. Here $V_1=(v_{11}, v_{21}, v_{31})$
is the unstable eigenvector associated to the $A_l$. We shall assume that both matrices $A_l$ and $A_r$ are non-singular and do not have the eigenvalue 1. Further, we suppose that
$C_1+\mu>0$, so that $P_1$ lies on the right side of the border.

Now we need to calculate the minimum number of iteration needed to cross the border again. Let's suppose $n$ iterations are needed to come again to the left side. We
shall sometimes refer to $n$ as the \emph{border return time}.

\begin{equation}
P_n=F^n(P_0)=A^n_r(P_0)+\mu (A_r-I)^{-1}(A^n_r-I)\left(\begin{array}{c}
1 \\
0\\
0 \end{array}\right)
\label{nth_iteration}
\end{equation}

Suppose that $\delta_r\neq 0$. We need to calculate the $n$-th power of a matrix $A$. We here use a recursion method similar to the one used by \cite{avrutin2016dangerous} and calculate $A^n$ for $n\geq 1$ as

\begin{equation}\label{recursive}
A^n=\left(
\begin{array}{ccc}
 a_n & b_n & c_n \\
 d_n & e_n & f_n \\
 g_n & h_n & i_n
\end{array}
\right)=\left(
\begin{array}{ccc}
 a_n & a_{n-1} & a_{n-2} \\
 \delta_r.a_{n-2}-\sigma_r.a_{n-1} & \delta_r.a_{n-3}-\sigma_r.a_{n-2} & \delta_r.a_{n-4}-\sigma_r.a_{n-3} \\
 \delta_r.a_{n-1} & \delta_r.a_{n-2} & \delta_r.a_{n-3}
\end{array}
\right)
\end{equation}

which follows the recursive equation given by
\begin{equation}\label{a_n_recursion}
a_n=\tau_ra_{n-1}-\sigma_ra_{n-2}+\delta_ra_{n-3}
\end{equation}
where the initial conditions are
\[a_0=1, a_{-1}=0, a_{-2}=0, a_{-3}=\frac{1}{\delta_r}\]

Assuming all the iterations are on the right side, after $n$-th iterations the coordinates are given by the following expression:

\begin{eqnarray}
P_n&=&F^n(P_0)=A^n_r(P_0)+\mu (A_r-I)^{-1}(A^n_r-I)\left(\begin{array}{c}
1 \\
0\\
0 \end{array}\right)\nonumber \\
&=&\left(
\begin{array}{ccc}
 a_n & b_n & c_n \\
 d_n & e_n & f_n \\
 g_n & h_n & i_n
\end{array}
\right)\left(\begin{array}{c}
x_0 \\
y_0\\
z_0 \end{array}\right)+ \mu\left(
\begin{array}{ccc}
 a_{11} & a_{12} & a_{13} \\
 a_{21} & a_{22} & a_{23} \\
 a_{31} & a_{32} & a_{33}
\end{array}
\right)\left(
\begin{array}{ccc}
 a_n-1 & b_n & c_n \\
 d_n & e_n-1 & f_n \\
 g_n & h_n & i_n-1
\end{array}
\right)\left(\begin{array}{c}
1 \\
0\\
0 \end{array}\right) \nonumber\\
&=&\left(
\begin{array}{ccc}
 a_n & b_n & c_n \\
 d_n & e_n & f_n \\
 g_n & h_n & i_n
\end{array}
\right)\left(\begin{array}{c}
x_0 \\
y_0\\
z_0 \end{array}\right)+\mu\left(
\begin{array}{ccc}
 a_{11} & a_{12} & a_{13} \\
 a_{21} & a_{22} & a_{23} \\
 a_{31} & a_{32} & a_{33}
\end{array}
\right)\left(\begin{array}{c}
a_n-1 \\
d_n\\
 g_n\end{array}\right)\\
P_n &=&\left(\begin{array}{c}
 \mu(a_{11} (-1+a_n)+a_{12} d_n+a_{13} g_n)+a_n x_0+b_n y_0+c_n z_0 \\
 \mu(a_{21} (-1+a_n)+a_{22} d_n+a_{23} g_n)+d_n x_0+e_n y_0+f_n z_0 \\
 \mu(a_{31} (-1+a_n)+a_{32} d_n+a_{33} g_n)+g_n x_0+h_n y_0+i_n z_0
\end{array}
\right)
\end{eqnarray}

where $(A_r-I)^{-1}=\left(
\begin{array}{ccc}
 a_{11} & a_{12} & a_{13} \\
 a_{21} & a_{22} & a_{23} \\
 a_{31} & a_{32} & a_{33}
\end{array}
\right)$

Note that the recursion relations can be solved explicitly using standard methods (for the 2-dimensional case see \cite{avrutin2016dangerous}, appendix B) and we omit it here.
Also, a recursion scheme to compute the matrix $(A_r-I)^{-1}(A_r^n-I)$ can be given analogous to the one in \cite{saha2015}, appendix E to make the computations more efficient.
With this recursion scheme, we can thus calculate $P_1, P_2, \cdots, P_n$ efficiently until the $x$-coordinate of $P_n$ is negative.
So, we get minimum number $n$ at which the orbit crosses the border again.

%

\textbf{Equation of the stable eigenplane:} $L^*$ is a saddle fixed point which has one unstable eigenvector and two stable eigenvectors.
Equation of the stable eigenplane containing the two stable eigenvectors in vector notation, where $V_2$ and $V_3$ are the two stable eigenvectors is
$$
\overline{r}\cdot (V_2 \times V_3) =0
$$
In Cartesian co-ordinates, we can express this equation as

$$
a.x+b.y+c.z+d=0 \quad \text{ where } (a,b,c)^\intercal:=V_2 \times V_3
$$

As the eigenplane passes through the fixed point $(x_l,y_l,z_l)$, the constant term $d$ can be calculated as $-a.x_l-b.y_l-c.z_l$.

We then check whether the points $P_{n+1}$ and $P_n$ are on the opposite side of the stable eigenplane or not.
If they are on the opposite side, we infer that the unstable manifold must have gone through the stable manifold,
and therefore there is homoclinic intersection resulting the occurrence of chaotic dynamics.

Similar calculation can be carried out for the fixed point at $R^*$. In the next subsection we give a method based on
complex interpolation of the discrete system given by Equation (\ref{normalform}) to estimate the border return time.

\subsection{Complex interpolation and estimation of border return time}

In this subsection we present an interpolation method which considers the positive real exponents of a non-singular real matrix $A$, i.e. matrices of the form $A^t$ for $t\in \mathbb{R}^+$. Note that such exponents are
defined using the complex logarithm of $A$ and the result takes values in the algebra of matrices with complex entries. The matrix $A^t$ is defined by the following formula:

$$
A^t:= \exp(t \ln A)
$$

In general, this is a multi-valued function due to the complex logarithm. However, in this paper we shall always restrict ourselves to the principal value of the complex logarithm
to get a unique matrix $A^t$. Therefore, unless otherwise stated $\ln(z)$ will denote the principal value of the complex logarithm.

For a $3\times 3$-diagonal matrix $D= \begin{bmatrix} z_1 & 0 & 0\\ 0 & z_2 & 0 \\ 0 & 0 & z_3 \end{bmatrix}$ with non-zero complex entries $z_i, i =1, 2, 3$, the matrix $\ln(D)$ is easily defined as
$$
\ln(D): = \begin{bmatrix} \ln(z_1) & 0 & 0\\ 0 & \ln(z_2) & 0 \\ 0 & 0 & \ln(z_3) \end{bmatrix}
$$

Therefore, the matrix $D^t$ for $t\in \mathbb{R^+}$ is given by
$$
D^t:= \begin{bmatrix} \exp(t\ln(z_1)) & 0 & 0\\ 0 & \exp(t\ln(z_2)) & 0 \\ 0 & 0 & \exp(t\ln(z_3)) \end{bmatrix}
$$

\begin{remark}\label{JCdec}
In case of a matrix $A$ having repeated eigenvalues,  given its block-diagonal Jordan normal form, one can also define the matrix $\ln(A)$
using the so-called Jordan-Chevalley decomposition of $A$ into a sum of two matrices $S$ and $N$,
where $S$ is a semisimple matrix and $N$ a nilpotent matrix. For more details, we refer the reader to \cite{HSieh99}, chapter 4. In this paper however, we restrict ourselves to the case of distinct
eigenvalues only.
\end{remark}

Turning to the system given by Equation (\ref{normalform}), we wish to diagonalize the matrices $A_l$ and $A_r$, and then use the above formulae to define the continuous complex interpolation of the system.
Let $P_l$( resp. $P_r$) be the complex base change matrix for $A_l$(resp. $A_r$). Let us assume that neither $A_l$ nor $A_r$ has repeated roots, so that the matrices $D_l= P_l A_l P_l^{-1}$ and
$D_r= P_r A_r P_r^{-1}$ are diagonal (with complex entries). This excludes the case of block-diagonal matrices, although much of the analysis goes through once the matrices $\ln(D_l)$ (or $\ln(D_r)$)
have been defined using the Jordan-Chevalley decomposition, as in Remark (\ref{JCdec}). We also assume as before that the matrices $A_l$ and $A_r$ do not have any eigenvalue equal to 0 or 1. As a consequence,
the matrices $A_l, A_r$, $A_l- I$ and $A_r- I$ are invertible.

For $t \in \mathbb{R}^+$, we set $A_l^t: = P_l^{-1} D^t_l P_l$ and similarly $A_r^t: = P_r^{-1} D^t_r P_r$.
We use the notation $\Re(z)$ to denote the real part of any $m$-tuple of complex numbers $z\in \mathbb{C}^m$.

\begin{definition}
The complex interpolation of the discrete system in Equation (\ref{normalform}) is given by the following formula: for any $t \in [0, 1]$ and
a starting point $X_0 = (x_0, y_0, z_0) \in \mathbb{R}^3$ such that $y_0+\mu \neq 0$,
\begin{equation}\label{complexinterpol}
X(t)\equiv F^t_{\mu}(X_0): =
\begin{cases}
A^t_lX_0+\mu (A_l- I)^{-1}(A_l^t- I)C, &\text{if $x_0 < 0$ or ($x_0=0$ and $y_0 <-\mu$)}\\
A^t_rX_0+\mu (A_r- I)^{-1}(A_r^t- I)C, &\text{if $x_0 > 0$ or ($x_0=0$ and $y_0> -\mu$)} 
\end{cases}
\end{equation}

\end{definition}

The system given by Equation (\ref{complexinterpol}) yields a continuous, piecewise smooth curve $\Re({X(t)})$ (the real part of the complex function $X(t)$) in $\mathbb{R}^3$ for $t \in [0, 1]$. The definition for the complex interpolation for any $t \in \mathbb{R}^+$ is then straightforward to deduce:
$$
F^t_\mu(X_0): = F^{\{t\}}_\mu(F^{\floor{t}}_\mu(X_0))
$$

where $\floor{t}$ is the greatest integer less than or equal to $t$ and $\{t\}$ is the fractional part of $t$. It follows immediately from the definition that the complex interpolation $F^t_\mu$
agrees with the system in Equation (\ref{normalform}) whenever $t$ is an integer. In the definition, we have excluded the line $y_0 =-\mu$ in the $Y$-$Z$ plane so that there is no ambiguity 
in the interpolation; however, the interpolation formulae can be extended to this case according to the situation where the \emph{second} iterate of $X_0$ under $F_\mu$ has $x$-coordinate positive or negative- 
equivalently, whether $z_0>-\mu$ or $z_0 <-\mu$. That leaves us with the case $X_0 = (0, -\mu, -\mu)^\intercal$ for which one should consider on which side of the border the \emph{third} iterate lies. 
This is simply the condition $\mu >0 $ or $\mu <0$. In this way one can extend the interpolation scheme to every point in the $Y$-$Z$ plane.

\begin{definition} We shall call the interpolation $X(t)$ the \emph{companion orbit} of the dynamical system (\ref{normalform}) starting at $X_0$.\\
If $X_0$ lies on the stable (resp. unstable)
 manifold of $L^*$ or $R^*$, we shall call $X(t)$ the \emph{companion stable} (resp. \emph{companion unstable}) manifold.
\end{definition}

Since the companion orbit must agree with the actual orbit at integer points, the following lemma is immediate:

\begin{lemma}
The orbit of a point under $F_\mu$ is bounded if and only if the corresponding companion orbit $F^t_\mu$ is bounded for all $t\in \R^+$.
\end{lemma}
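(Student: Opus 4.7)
The plan is to prove both implications by exploiting the semigroup-like identity
$$
F^t_\mu(X_0) = F^{\{t\}}_\mu\bigl(F^{\lfloor t\rfloor}_\mu(X_0)\bigr),
$$
valid for all $t\in\R^+$, where $\lfloor t\rfloor$ and $\{t\}$ denote the integer and fractional parts of $t$ respectively. The forward direction is immediate: since $F^t_\mu$ agrees with $F_\mu^{\,t}$ whenever $t\in\N$, the discrete orbit $\{F^n_\mu(X_0):n\in\N\}$ is contained in the companion trajectory $\{F^t_\mu(X_0):t\in\R^+\}$, so any uniform bound on the latter restricts to one on the former.

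For the converse, assume the discrete orbit is bounded with $\|F^n_\mu(X_0)\|\leq K$ for all $n\in\N\cup\{0\}$. By the identity above, for any $t\in\R^+$ the companion point $F^t_\mu(X_0)$ is the image, under a fractional-time interpolation of length $s=\{t\}\in[0,1)$, of the point $Y = F^{\lfloor t\rfloor}_\mu(X_0)$, which already lies in the ball of radius $K$. By the explicit formula in Equation (\ref{complexinterpol}),
$$
F^s_\mu(Y) = \Re\bigl(A^s\,Y + \mu (A-I)^{-1}(A^s-I)\,C\bigr),
$$
where $A\in\{A_l,A_r\}$ is determined by the side of the border on which $Y$ lies.

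The key step is to control $\|A^s\|$ uniformly for $s\in[0,1]$ and for both choices of $A$. Since $A^s = P^{-1}D^s P$ with $D^s$ a diagonal matrix whose entries are $\exp(s\ln\lambda_i)$ for the (nonzero) eigenvalues $\lambda_i$ of $A$, the principal branch of the logarithm makes $s\mapsto A^s$ continuous on the compact interval $[0,1]$. Hence $M_A := \sup_{s\in[0,1]}\|A^s\|$ is finite. Setting $M:=\max(M_{A_l},M_{A_r})$ and
$$
N := |\mu|\,(M+1)\,\|C\|\,\max\bigl(\|(A_l-I)^{-1}\|,\,\|(A_r-I)^{-1}\|\bigr),
$$
we obtain $\|F^t_\mu(X_0)\| \leq M K + N$ uniformly in $t\in\R^+$, which establishes boundedness of the companion orbit.

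The only mildly delicate point is the continuity (and consequent uniform boundedness) of $s\mapsto A^s$ on $[0,1]$; this relies on the standing assumptions that the eigenvalues of $A_l$ and $A_r$ are nonzero and distinct, that both matrices are diagonalisable over $\C$, and on the principal-value convention for $\ln$. All other manipulations are essentially the triangle inequality applied to the explicit interpolation formula.
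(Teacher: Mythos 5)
Your proof is correct, and it supplies the details that the paper simply waves away by calling the lemma ``immediate.'' The direction \emph{companion bounded} $\Rightarrow$ \emph{orbit bounded} is indeed trivial (restriction to integer $t$), which is presumably what the paper has in mind; the converse is the direction that actually needs an argument, and you correctly identify the key point: $s\mapsto A^s$ is continuous on the compact interval $[0,1]$ (principal branch, nonzero distinct eigenvalues, diagonalisability over $\C$), so $M_A=\sup_{s\in[0,1]}\|A^s\|<\infty$, and then the triangle inequality applied to the explicit interpolation formula gives a uniform bound of the form $MK+N$. The only minor caveat you leave unaddressed is the excluded line $y_0=-\mu$ in the definition of the fractional-time interpolation: if some integer iterate $F^{\lfloor t\rfloor}_\mu(X_0)$ lands on that line, the formula as written is undefined. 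The paper itself notes this can be patched by looking ahead to later iterates, and the patch does not affect your uniform bound, so this is a technicality rather than a genuine gap.
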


The complex interpolation function $F^t_\mu$ will be used to estimate the border return time in the next subsection. A few remarks are in order.

\begin{remark}
\begin{enumerate}
 \item The terminology `companion stable' does not imply that the curve $X(t)$ itself is stable under iterations of $F_\mu$, rather it is meant to imply that it must intersect with the
 stable manifold infinitely many times. In particular, $F^t_\mu$ does not follow the semi-group composition law $F^{t+s}_\mu=F^t_\mu\circ F^s_\mu$ in general.
 \item We have nevertheless found via numerical simulations that in many cases, the companion orbits (or companion stable or unstable manifolds) do
 capture a lot of global dynamics of the system in Equation (\ref{normalform}), see Figure (\ref{Companionorbit}) below in which a companion orbit
 as well as the companion unstable manifold of $U_l$ for the motivating example in Section 3 is given.
\end{enumerate}

\end{remark}

\begin{figure}
\centering
       \begin{subfigure}[]
       \centering
       \includegraphics[scale=.25]{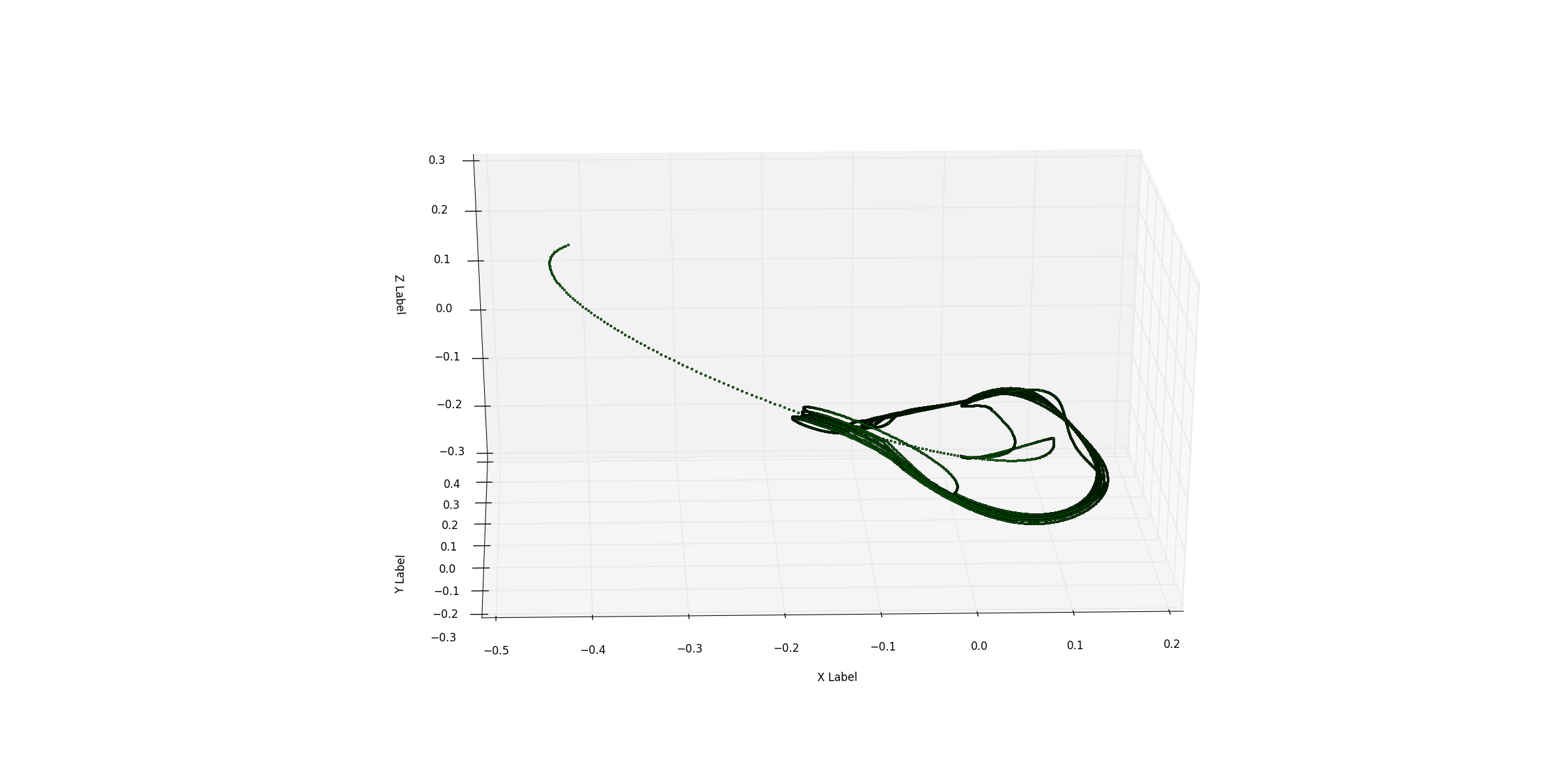}
       \end{subfigure}
       \begin{subfigure}[]
       \centering
       \includegraphics[scale=.25]{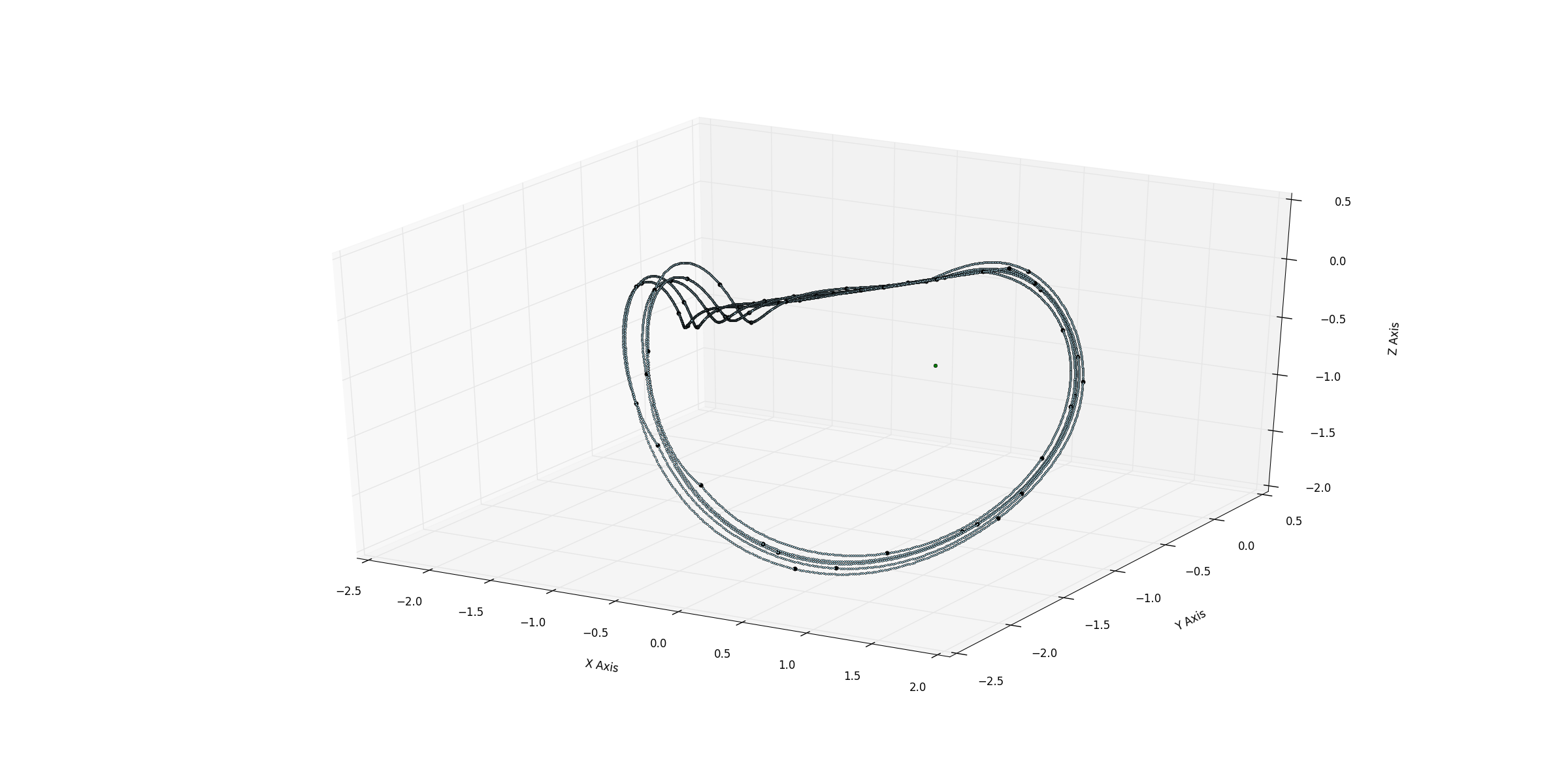}
       \end{subfigure}
\caption{(a) An example of a companion orbit, and (b) The companion unstable manifold for $U_l$ , for the motivating example of Section (\ref{example}). In (b),
the black dots along the companion orbit represent the actual orbit points, while the green dot at the center is the right fixed point $R^*$.}
\label{Companionorbit}
\end{figure}
\subsubsection{Estimating the border return time}
In this section we give some results regarding upper bounds for the border return time as well as algebraic conditions for not returning to the border.
This is important in view of implementing the algorithm based on the recursive Equation (\ref{recursive}) in Subsection (\ref{rec}). Indeed, in order to check the transverse intersection condition by
implementing the algorithm, one must determine in advance an upper bound for the number of iterations $n$ since the border return time equation is transcendental in nature (see Equation
(\ref{borderreturntime})) and therefore has no general solution in closed form. Thus, one has to rely on a computer program to find the least positive solution for such equations.
For such a program, hard-coding a fixed number of iterates to check the border crossing condition for different parameter values is clearly not a reliable method, and this is where upper bounds
for various parameter regions help us determine the number of iterates to check whether a transverse homoclinic intersection exists after the orbit crosses the border.

Let us consider the system in Equation (\ref{normalform}) under the change of co-ordinates corresponding to the eigenbasis of $A_l$. Let us
denote the eigenvalues of $A_l$ by $(\lambda_1, \lambda_2, \lambda_3)$. Let $P_l =(p_{ij})_{i, j =1, 2, 3}$ be the change of basis matrix with respect to the given eigenvalues, and let $P^{-1}_l$
be given by a matrix $(\rho_{ij})_{i, j = 1, 2, 3}$. Thus, the new co-ordinates, which shall be denoted in vector form by $\chi= (\chi_1, \chi_2, \chi_3)^\intercal$ and
the old (standard Euclidean) co-ordinates denoted by $X=(x, y, z)^\intercal$ are related by the change of
base matrix $P_l$:
$$
\chi = P_l^{-1} X \Leftrightarrow \begin{bmatrix} \chi_1 \\ \chi_2 \\ \chi_3 \end{bmatrix}= P_l^{-1}\begin{bmatrix} x \\ y \\ z\end{bmatrix}
$$

Under the $\chi$-coordinates, for a starting point $\xi_0 = (\xi_{01}, \xi_{02}, \xi_{03}) \in \mathbb{R}^3$, the complex interpolation equation takes a particularly simple form. For $i=1,2,3$, we have:

\begin{equation}\label{newinterpol}
\chi_{i}(t) = \lambda_i^t \xi_{0i} + \frac{\lambda^t_i-1}{\lambda_i -1} \mu_i
\end{equation}

where $\mu_i = \mu \rho_{1i}$ for $i =1, 2, 3$.

To compute the border return time for a point $X_0= (0, y_0, z_0)$ in the old-coordinate system for which $y_0 +\mu < 0$ (which means the first iterate lands on the left side of the border), we derive the following equation. Let $\xi_0= P^{-1}_l X_0$. Then, the least
positive real solution of the equation below gives the border return time.

\begin{equation}\label{borderreturntime}
\Re({p_{11} \chi_{1}(t) + p_{12} \chi_2 (t) + p_{13}\chi_3(t)}) =0
\end{equation}

When expressed in the old co-ordinate system, the Equation (\ref{borderreturntime}) takes the following form:

\begin{eqnarray}\label{borderreturntime1}
 &\Re&\Big[ p_{11} ( \lambda_1^t (\rho_{12}y_0+ \rho_{13}z_0) + \frac{\lambda^t_1-1}{\lambda_1 -1} \mu_1) \nonumber \\ \nonumber
&+& p_{12} ( \lambda_2^t (\rho_{22}y_0+ \rho_{23}z_0) + \frac{\lambda^t_2-1}{\lambda_2 -1} \mu_2 ) \\
&+& p_{13}( \lambda_3^t (\rho_{32}y_0+ \rho_{33}z_0) + \frac{\lambda^t_3-1}{\lambda_3 -1} \mu_3 )\Big] =0
\end{eqnarray}

It is straighforward to check that $t=0$ is always a solution to Equation (\ref{borderreturntime1}), 
which corresponds to the starting position $X_0$. As already mentioned, the least positive real solution $t_0$ of Equation (\ref{borderreturntime1})
gives the crossing iteration $n = \floor{t_0}$, i.e. the $n$-th iteration lies on the left side of the border while the $(n+1)$-th iteration passes to the right side of the border.
Note that the above equation interpolates the actual orbit of $X_0$ under $F_\mu$ correctly only for $t\leq t_0$, after which the corresponding equation for the matrix $A_r$ must be used.

If the matrix $A_l$ has all positive real eigenvalues, then of course the expression within the big brackets in Equation (\ref{borderreturntime1}) is already real. However, if any of the eigenvalues
are negative or complex, then there is a non-zero complex part of the interpolation, which we nevertheless ignore.

\begin{remark}\label{flip}
A special case arises when the right fixed point $R^*$ has a flip unstable direction $U_r$ and the point $X_0$ is the intersection point of $U_r$ with the border $x=0$. In that case, the first iterate
will never land on the left side because it flips to the ``opposite side'' of the flip direction with respect to the orientation of $U_r$ on the right side of the border, and the next iterate then lands on $U_r$ again but now on the
left side of the border (since the direction is unstable). Therefore, in this case
the starting point $X_0$ has non-zero $x$-coordinate (in the standard Euclidean basis) and the Equation (\ref{borderreturntime1}) has to be modified accordingly.
This modification is nonetheless straightforward and we omit the details here. The border return time is then estimated as $t_0+2$, where $t_0$ is the least positive solution to the
modified equation. The corresponding function $f(t)$
has the same general form as in the positive eigenvalue case (see Equations (\ref{borderreturntime}) and (\ref{brt2}) below).
\end{remark}

Let us analyse Equation (\ref{borderreturntime1}) in more detail for the special case when $\lambda_1 >1$, and $\lambda_2 = \overline{\lambda}_3 = r_0 e^{i\theta_0}, r_0>0, \theta_0 \in (-\pi, \pi]$ are a pair of complex eigenvalues with absolute
value less than 1. Equation (\ref{borderreturntime1}) then takes the following general form:

\begin{equation}\label{brt2}
f(t):= \alpha_1 \lambda_1^t +2 \alpha_2 \cos(t\theta_0+\delta)  r_0^t +\alpha_3 =0, \quad t \geq 0
\end{equation}

where $\alpha_1, \alpha_2, \alpha_3, \delta \in \mathbb{R}$. Our goal is to give upper bounds for the smallest positive root of $f(t)$.

Note that the cosine term makes $f(t)$
an oscillating function. It has two enveloping curves given by
\begin{eqnarray}\label{bounding}
f^+(t)&=&  \alpha_1 \lambda_1^t +2 \alpha_2  r_0^t +\alpha_3, \quad \text{ and } \nonumber \\
f^-(t)&=&  \alpha_1 \lambda_1^t -2 \alpha_2  r_0^t +\alpha_3
\end{eqnarray}

The condition $f^-(t) \leq f(t) \leq f^+(t)$ is then satisfied for all $t\geq 0$. 
The functions of the kind $f^\pm(t)$ are sometimes called \emph{exponential polynomials } or \emph{generalized Dirichlet polynomials} \cite{jameson2006counting}, \cite{Tossavainen}.
We thus get the following general form of the enveloping curves given in Equation (\ref{bounding}):


\begin{eqnarray}\label{generalexppol}
g(t):= a_1 \kappa_1^t  + a_2 \kappa_2^t + a_3, \quad \quad t \geq 0;  \quad \quad \kappa_1>1> \kappa_2>0,\quad \text{ and } a_i \in \mathbb{R} \text{ for } i=1,2,3.
 \end{eqnarray}

We are interested in the location of zeros of $g(t)$ in the interval $(0, \infty)$. Note that 
for such functions, Descartes' rule of signs is applicable, and we have the following nice theorem, sometimes called the \emph{lost cousin of the Fundamental Theorem of algebra}:

\begin{theorem}\label{fundamental}\cite{jameson2006counting}
Let $\sigma(g)$ be the number of sign changes in the sequence $(a_1, a_2, a_3)$. Then the number of zeros $Z(g)$ of $g(t)$ (counted with multiplicity) in
the interval $(0, \infty)$, is bounded above by $\sigma(g)$. Moreover, the difference $\sigma(g)- Z(g)$ is always an even number, and $Z(g') \geq Z(g)-1$.
\end{theorem}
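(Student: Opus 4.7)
My plan is to handle the three conclusions in order of increasing difficulty. The final assertion $Z(g') \geq Z(g) - 1$ is the most immediate: by Rolle's theorem applied to $g$ on $(0, \infty)$, strictly between any two consecutive distinct zeros of $g$ lies at least one zero of $g'$, and a zero of multiplicity $m \geq 1$ of $g$ contributes multiplicity at least $m-1$ to $g'$. Summing over all zeros in $(0,\infty)$ gives the bound.

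For the main inequality $Z(g) \leq \sigma(g)$ together with the parity statement that $\sigma(g)-Z(g)$ is even, I would view $g(t) = a_1 \kappa_1^t + a_2 \kappa_2^t + a_3$ as a three-term exponential (Dirichlet) polynomial, with real exponents $\ln\kappa_1 > 0 > \ln\kappa_2$ and three non-trivial coefficients (assuming, as we may, that none of the $a_i$ vanish). I would invoke the classical Descartes-style rule for Dirichlet polynomials as cited from \cite{jameson2006counting}, whose proof proceeds by induction on the number of terms. The standard reduction is as follows: let $\mu$ denote the smallest exponent of $g$ and set $h(t) := e^{-\mu t} g(t)$, which has exactly the same zeros as $g$. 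The derivative $h'(t)$ is again a Dirichlet polynomial but with one fewer term, and, because multiplication by the positive factor $(\lambda_i-\mu)$ preserves signs, the sign pattern of the coefficients of $h'$ matches that of the surviving coefficients of $h$. The inductive hypothesis bounds $Z(h')$ by the appropriate sign-change count, and then Rolle's theorem, in the form $Z(h) \leq Z(h')+1$, converts this into a bound for $Z(g)=Z(h)$.

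The main obstacle, and the heart of the proof, is to sharpen this Rolle-based estimate from an a priori off-by-one inequality $Z(g) \leq \sigma(g)+1$ to the claimed $Z(g) \leq \sigma(g)$, and to extract the parity assertion at the same time. The standard way to close this gap is a boundary-sign argument: since $\kappa_1^t$ dominates as $t \to \infty$ one has $\mathrm{sign}(g(+\infty)) = \mathrm{sign}(a_1)$, while $g(0^+) = a_1+a_2+a_3$. Because $g$ is continuous and real-valued, the number of sign-crossings of $g$ on $(0,\infty)$ has a parity pinned down by whether these two boundary signs agree or disagree. Carrying this parity bookkeeping through the induction in parallel with the upper-bound estimate matches the Rolle slack precisely against the sign-change pattern of the coefficient sequence, yielding both the sharp inequality $Z(g) \leq \sigma(g)$ and the evenness of $\sigma(g)-Z(g)$ in a single stroke. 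The combinatorial tracking of sign changes across the inductive step, rather than the analytic input of Rolle, is therefore the most delicate piece of the argument.
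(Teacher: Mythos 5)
The paper does not prove this theorem; it quotes it from Jameson (2006) and only uses it as a black box. So there is no in-paper proof to compare your argument to --- I will evaluate it on its own.

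Your Rolle argument for $Z(g') \geq Z(g)-1$ is correct and standard. The rest of the proposal, however, has two genuine gaps.

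First, taking $\mu$ to be the \emph{smallest} exponent does not reduce the sign-change count, so your induction never escapes the off-by-one bound $Z(g)\leq\sigma(g)+1$. Differentiating $h(t)=e^{-\mu t}g(t)$ kills the lowest-exponent term and multiplies the coefficient of $e^{(\lambda_i-\mu)t}$ by $\lambda_i-\mu$; with $\mu$ the minimum, these factors are all positive, signs are preserved, and $\sigma(h')$ can still equal $\sigma(g)$ (it drops only if the discarded term happened to sit at a sign change). The standard sharpening that makes Descartes' rule work is to choose $\mu$ \emph{strictly between} two consecutive exponents at which a sign change occurs: then $\lambda_i-\mu$ changes sign across $\mu$, so $\sigma(h')\leq\sigma(g)-1$ by construction, and Rolle gives $Z(g)\leq Z(h')+1\leq\sigma(h')+1\leq\sigma(g)$. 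Your proposal, taken literally, never performs this pivot.

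Second, the boundary-sign argument you use to pin down parity and absorb the Rolle slack fails at $t=0^+$. You correctly note that $g(0^+)=a_1+a_2+a_3$, but the sign of this sum is \emph{not} determined by the sign pattern of $(a_1,a_2,a_3)$, so the parity of the number of sign crossings on $(0,\infty)$ is not tied to $\sigma(g)$. A concrete counterexample: $g(t)=2^t+2^{-t}-1$ has coefficient signs $(+,+,-)$, hence $\sigma(g)=1$, yet $g(t)\geq 2-1=1>0$ for all $t$, so $Z(g)=0$ on $(0,\infty)$ and $\sigma(g)-Z(g)$ is odd. This shows not only that your boundary argument cannot close the gap but also that the theorem \emph{as quoted in the paper} is a misstatement of Jameson's result: the sign changes should be counted in the sequence ordered by exponent, which here is $(a_2,a_3,a_1)$ since $\ln\kappa_2<0<\ln\kappa_1$, and the zero count should be over all of $\mathbb{R}$ (equivalently, over $x\in(0,\infty)$ for $f(x)=a_1x^{\ln\kappa_1}+a_2x^{\ln\kappa_2}+a_3$ after $x=e^t$). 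In that correct formulation the parity argument works because $g(-\infty)\sim a_2\kappa_2^t$ and $g(+\infty)\sim a_1\kappa_1^t$ have signs pinned by the \emph{extreme-exponent} coefficients, which is exactly what the sign-change count compares. Your proof, aimed at the interval $(0,\infty)$ with the unordered coefficient triple, inherits the paper's misstatement and therefore cannot go through as written; restating the hypothesis correctly and using the pivot-$\mu$ reduction, together with the $\pm\infty$ boundary signs, would repair both gaps.
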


The following corollaries are immediate.

\begin{corollary}
 With the same assumptions as in Theorem (\ref{fundamental}), the number of zeros of $g(t)$ is at most 2.
\end{corollary}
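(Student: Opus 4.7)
The plan is to obtain this as an immediate consequence of Theorem (\ref{fundamental}) applied to the three-term exponential polynomial in Equation (\ref{generalexppol}). Since $g(t) = a_1 \kappa_1^t + a_2 \kappa_2^t + a_3$ has coefficient sequence $(a_1, a_2, a_3)$ of length three, I would first invoke the bound $Z(g) \leq \sigma(g)$ from the theorem, and then separately argue that $\sigma(g) \leq 2$ purely from the combinatorics of sign changes in a length-three sequence of real numbers.

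For the sign-change count, I would note that if all three $a_i$ are nonzero, then $\sigma(g)$ counts the number of indices $i \in \{1,2\}$ for which $a_i a_{i+1} < 0$, which is obviously at most $2$. In the degenerate case where one or more of the $a_i$ vanish, one follows the usual convention of deleting the zero entries before counting sign changes (as is standard in Descartes' rule of signs), and the resulting shortened sequence has at most two nonzero entries, hence at most one sign change. In either case, $\sigma(g) \leq 2$, so combining with Theorem (\ref{fundamental}) yields $Z(g) \leq 2$.

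I do not expect any genuine obstacle here; the corollary is essentially a bookkeeping remark noting that the ``fundamental theorem'' in this setting gives a bound determined by the length of the coefficient sequence. The only subtlety worth flagging in the write-up is the convention about zero coefficients, which should be stated explicitly so that the degenerate cases (such as $a_3 = 0$, corresponding to the return-time function passing through the origin trivially at $t=0$) are handled uniformly. No further computation or case analysis on the exponents $\kappa_1, \kappa_2$ is needed, since the bound depends only on the number of terms.
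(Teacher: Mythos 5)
Your proof is correct and is exactly the argument the paper has in mind: the paper labels this corollary as ``immediate,'' and the content is simply that a three-term coefficient sequence admits at most two sign changes, so $Z(g)\leq\sigma(g)\leq 2$ by Theorem~(\ref{fundamental}). Your added remark about the zero-coefficient convention is a sensible clarification but does not change the substance.
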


\begin{corollary}
 With the same assumptions as in Theorem (\ref{fundamental}), if $\sigma(g)=0$ then $g(t)$ has no solution.
\end{corollary}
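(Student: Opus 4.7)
The plan is to invoke Theorem (\ref{fundamental}) as a black box. That theorem provides the inequality $Z(g) \leq \sigma(g)$, where $Z(g)$ counts the zeros of $g$ in $(0, \infty)$ (with multiplicity) and $\sigma(g)$ is the number of sign changes in the coefficient sequence $(a_1, a_2, a_3)$. Under the hypothesis $\sigma(g)=0$ we immediately get $Z(g) \leq 0$, and since $Z(g)$ is a non-negative integer by definition, this forces $Z(g)=0$. That is exactly the claim that $g(t)=0$ has no solution in $(0, \infty)$.

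As a sanity check, I would also sketch the elementary direct argument: $\sigma(g)=0$ means all non-zero entries of $(a_1, a_2, a_3)$ share a common sign. Because $\kappa_1^t, \kappa_2^t > 0$ for every $t \geq 0$, each term $a_i \kappa_i^t$ (with $\kappa_3 := 1$ so that $a_3 = a_3 \kappa_3^t$) has the same sign as the corresponding $a_i$, and the sum $g(t)$ therefore retains that common sign on all of $(0, \infty)$ and in particular never vanishes there (the degenerate case $a_1=a_2=a_3=0$ can be excluded since $g$ would then be identically zero, outside the intended scope). There is no genuine obstacle: the corollary is purely a specialization of the preceding theorem to the extremal case $\sigma(g)=0$, so the only care needed is the observation that $Z(g)\geq 0$ automatically, closing the squeeze.
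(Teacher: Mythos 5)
Your proof is correct and matches what the paper intends: the paper labels this corollary as ``immediate'' from Theorem (\ref{fundamental}), and your argument ($Z(g) \leq \sigma(g) = 0$ together with $Z(g) \geq 0$ forces $Z(g) = 0$) is exactly that immediate deduction. The elementary sanity-check argument you add is sound but not needed.
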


Note that our original goal was to find roots of the function $f(t)$ of Equation (\ref{brt2}). Since $f^\pm(t)$ bound $f(t)$, the following lemma is obvious:

\begin{lemma} \label{existence1}
If $f(t)$ has at least one solution for $t>0$, then either $f^+(t)$ or $f^-(t)$ must have at least one solution for $t>0$. The least positive solution of $f(t)$ is therefore bounded above
by the largest positive solution of $f^\pm(t)$.
\end{lemma}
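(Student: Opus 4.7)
The plan is to sandwich $f$ between $f^+$ and $f^-$ and then apply the intermediate value theorem on the envelope that escapes to infinity in the correct direction, producing a positive root of $f^+$ or $f^-$ that sits above any given positive root of $f$. First I would observe that, by replacing $\delta$ with $\delta + \pi$ if necessary (which merely negates the coefficient $2\alpha_2$ inside the cosine term), we may assume without loss of generality that $\alpha_2 \geq 0$; the envelope inequality $f^-(t) \leq f(t) \leq f^+(t)$ quoted in Equation (\ref{bounding}) then holds for all $t \geq 0$.

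Next, let $t_0 > 0$ be any positive root of $f$. Evaluating the sandwich at $t_0$ gives $f^-(t_0) \leq 0 \leq f^+(t_0)$. The second step is an asymptotic analysis of $f^\pm$: since $0 < r_0 < 1 < \lambda_1$, the term $\alpha_1 \lambda_1^t$ dominates $\pm 2\alpha_2 r_0^t$ as $t \to \infty$, so both $f^+(t)$ and $f^-(t)$ tend to $+\infty$ when $\alpha_1 > 0$ and to $-\infty$ when $\alpha_1 < 0$. If $\alpha_1 > 0$, then combining $f^-(t_0) \leq 0$ with $\lim_{t \to \infty} f^-(t) = +\infty$ and continuity of $f^-$ yields, by the intermediate value theorem, a point $s \in [t_0, \infty)$ with $f^-(s) = 0$. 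If $\alpha_1 < 0$, a symmetric application of the IVT to $f^+$ on $[t_0, \infty)$ produces $s \geq t_0$ with $f^+(s) = 0$. In either case, at least one of $f^+$, $f^-$ possesses a positive root, which proves the existence assertion.

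For the upper bound, let $T^\pm$ denote the largest positive root of $f^\pm$ when it exists; by the corollary to Theorem \ref{fundamental}, each of $f^+$, $f^-$ has at most two positive roots, so $T^\pm$ are well defined whenever nonempty. In either case of the previous step the constructed root $s$ satisfies $t_0 \leq s$ and $s \leq T^+$ or $s \leq T^-$, so in particular $t_0 \leq \max(T^+, T^-)$. Since $t_0$ was chosen arbitrarily among the positive roots of $f$, the least such root satisfies the same bound, which is exactly the second assertion of the lemma.

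The main obstacle, or rather the only delicate point, is the degenerate case $\alpha_1 = 0$, in which the asymptotic dominance argument fails: then $f^\pm(t) \to \alpha_3$ as $t \to \infty$, and if additionally $\alpha_3 = 0$ the envelopes $\pm 2\alpha_2 r_0^t$ have no positive roots whereas $f$ may still vanish at the zeros of cosine. If $\alpha_1 = 0$ but $\alpha_3 \neq 0$, the same IVT argument goes through using the envelope whose limit at infinity has sign opposite to its value at $t_0$. The pathological subcase $\alpha_1 = 0 = \alpha_3$ would have to be excluded (or handled directly by inspection, since then $f(t) = 2\alpha_2 \cos(t\theta_0 + \delta) r_0^t$ has roots coming only from the cosine and one can bound the least one explicitly by $(\tfrac{\pi}{2} - \delta)/\theta_0 \bmod \pi/\theta_0$), but this degenerate situation does not arise for the generic saddle-focus configurations considered in Section \ref{example}.
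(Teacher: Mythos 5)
Your proof is correct. The paper provides no proof of Lemma~\ref{existence1}, stating only that it is ``obvious''; your argument based on the intermediate value theorem applied to whichever envelope tends to infinity with the appropriate sign is the natural way to make it rigorous, and it works: at a positive root $t_0$ of $f$ the sandwich gives $f^-(t_0)\leq 0\leq f^+(t_0)$, while $\lambda_1>1>r_0$ forces $f^\pm(t)\to\operatorname{sgn}(\alpha_1)\cdot\infty$, so IVT produces a root $s\geq t_0$ of $f^-$ (if $\alpha_1>0$) or of $f^+$ (if $\alpha_1<0$), which simultaneously gives the existence claim and the upper bound.

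You also catch two points the paper glosses over. First, the paper asserts $f^-\leq f\leq f^+$ unconditionally, but this requires $\alpha_2\geq 0$; your phase-shift normalization $\delta\mapsto\delta+\pi$ is the right way to restore this without loss of generality. Second, the lemma is in fact false as stated in the degenerate case $\alpha_1=\alpha_3=0$ with $\alpha_2\neq 0$: then $f(t)=2\alpha_2\cos(t\theta_0+\delta)r_0^t$ has infinitely many positive roots while $f^\pm(t)=\pm 2\alpha_2 r_0^t$ have none. Your observation that this case does not occur for the generic saddle-focus configuration the paper is concerned with (where the coefficient $\alpha_1$ attached to the unstable eigendirection is nonzero) is the correct resolution, and is a hypothesis the paper should have stated explicitly.
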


Therefore to find an upper bound for the least positive solution of $f(t)$, it suffices to find an upper bound for the largest solution of $f^\pm(t)$, and in turn it suffices to bound the
solutions of the general form $g(t)$. Assuming that at least one solution of $g(t)$ exists in the interval $(0, \infty)$, we give elementary estimates for the upper bound in the following lemma.

\begin{lemma}\label{upper bound}
 If $Z(g)>0$, there is no solution of $g(t)$ in the region $t \geq t_0$, where 
 $$t_0 = \frac{\ln((|a_2|+|a_3|)/|a_1|)}{\ln(\kappa_1)} $$. 
\end{lemma}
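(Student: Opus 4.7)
My plan is a direct reverse-triangle-inequality argument that exploits the exponential dominance of the $\kappa_1^t$ term (since $\kappa_1 > 1$) over the decaying term $\kappa_2^t$ (since $\kappa_2 < 1$) and the constant $a_3$. Heuristically, once $t$ is large enough that $|a_1|\kappa_1^t$ exceeds the worst-case size of the remaining terms, $g(t)$ cannot vanish; the threshold $t_0$ is precisely where this transition occurs.

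Concretely, I would first apply the triangle inequality twice to obtain
\begin{equation*}
|g(t)| \;\geq\; |a_1|\kappa_1^t - |a_2|\kappa_2^t - |a_3|.
\end{equation*}
Because $0 < \kappa_2 < 1$, one has $\kappa_2^t \leq 1$ for every $t \geq 0$, which strengthens this to $|g(t)| \geq |a_1|\kappa_1^t - (|a_2| + |a_3|)$. Now I would solve $|a_1|\kappa_1^t = |a_2|+|a_3|$: since $\ln\kappa_1 > 0$, the unique solution is exactly $t = t_0$, and because $\kappa_1^t$ is strictly increasing, any $t > t_0$ yields $|a_1|\kappa_1^t > |a_2|+|a_3|$ and therefore $|g(t)| > 0$. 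This shows $g$ has no zero on $(t_0, \infty)$, which is the content of the lemma (interpreting $t \geq t_0$ as excluding the open tail past $t_0$).

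The one point to handle carefully is that $t_0$ must be both well-defined (i.e.\ $a_1\neq 0$) and nonnegative for the conclusion to be informative. Well-definedness is implicit in the hypothesis, since the statement involves $|a_1|$ in a denominator; the case $a_1 = 0$ reduces $g$ to the form $a_2\kappa_2^t + a_3$, whose unique possible zero can be located by an elementary direct computation rather than through this lemma. Nonnegativity of $t_0$ is where the hypothesis $Z(g) > 0$ enters: if $t_0 < 0$, then $|a_1| > |a_2|+|a_3|$, and the same bound above gives $|g(t)| > 0$ for all $t \geq 0$, contradicting the existence of a positive root. So $Z(g) > 0$ forces $t_0 \geq 0$, and the interval $[0, t_0]$ contains all positive roots of $g$, which by Theorem \ref{fundamental} are at most two in number. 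I do not anticipate any real obstacle here; the main thing is to be precise at the boundary $t = t_0$, where the strict inequality $|g(t)| > 0$ can degenerate (e.g.\ when $a_2 = 0$ and the unique positive root lands exactly at $t_0$), so the argument naturally delivers the strict statement on $(t_0,\infty)$.
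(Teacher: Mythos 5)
Your proof is correct, and it takes a genuinely different route from the paper's. The paper proceeds by a case analysis on the sign pattern of $(a_1, a_2, a_3)$ (as classified by the sign-change count $\sigma(g)$ from Theorem \ref{fundamental}): Case 1(a) with $a_1, a_2 > 0$, $a_3 < 0$ yields the threshold $t_0^a = \ln(|a_3|/|a_1|)/\ln(\kappa_1)$; Case 1(b) with $a_1 > 0$, $a_2, a_3 < 0$ yields $t_0^b = \ln((|a_2|+|a_3|)/|a_1|)/\ln(\kappa_1)$; and Case 2 ($\sigma(g)=2$) is shown to be incompatible with $Z(g)>0$ via a monotonicity argument on $g'$. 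The bound in the lemma is then $\max\{t_0^a, t_0^b\} = t_0^b$. Your single reverse-triangle-inequality estimate $|g(t)| \geq |a_1|\kappa_1^t - |a_2|\kappa_2^t - |a_3| \geq |a_1|\kappa_1^t - (|a_2|+|a_3|)$ collapses all of this casework into one stroke, sidesteps Theorem \ref{fundamental} entirely, and makes the positivity of $t_0$ (under $Z(g)>0$) transparent rather than checked case-by-case; this is a cleaner and more robust argument. The price you pay is a worst-case bound: the paper's Case 1(a) gives a potentially smaller threshold $t_0^a$ when $a_2 > 0$, while your uniform bound always uses $|a_2|+|a_3|$. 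You are also right to flag the boundary $t=t_0$: strict inequality in your estimate can fail there when $a_2 = 0$ (in which case $g(t_0)=0$ is possible and the lemma's $\geq$ is too strong), so both your argument and the paper's actually prove the statement on $(t_0,\infty)$, a distinction that is immaterial for the paper's downstream use.
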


\begin{proof}
We treat separately the following two cases:
\begin{enumerate}
 \item $\sigma(g)=1$: we have either (a)  $a_1, a_2>0$, $a_3<0$, or (b) $a_1 >0$, $a_2, a_3<0$, since other cases can be reduced to these two.

 \item $\sigma(g)=2$:  $a_1, a_3>0$, $a_2<0$.

\end{enumerate}

\textbf{Case 1 (a)}: Since we have assumed that at least one solution exists, we must have $|a_3| >a_1$. In this case, we also have $|a_3|>|a_3|-a_2\kappa_2^t$. Therefore there is no solution in the
region $t>t^a_0$, where $t^a_0 = \frac{\ln(|a_3|/a_1)}{\ln(\kappa_1)}$, as in this region $|a_3| \leq a_1 \kappa_1^t$. Since $\kappa_1>1$ and $|a_3|>a_1$, $t^a_0$ is positive.

\textbf{Case 1 (b)}: One can use similar arguments as in the previous case to show that there is no solution for $t>t^b_0 = \frac{\ln((|a_2|+|a_3|)/a_1)}{\ln(\kappa_1)}$.

\textbf{Case (2)}:  Since $a_2<0$ and $\kappa_1 <1$, we have $g'(t)= a_1\ln(\kappa_1)\kappa_1^t + a_2\ln(\kappa_2)\kappa_2^t >0$ for all $t >0$. 
Therefore $g$ can have at most 1 positive solution. Since $\sigma(g)=2$, we must have $Z(g)=0$ by Theorem (\ref{fundamental}), which contradicts our assumption that $Z(g) >0$. 
Therefore the case $\sigma(g)=2$ cannot arise for $g(t)$.

Therefore we get that there is no solution for $g(t)$ for $t > \max\{ t_0^a, t_0^b\} = t^b_0= \frac{\ln((|a_2|+|a_3|)/|a_1|)}{\ln(\kappa_1)}$

%


\end{proof}

Using elementary arguments, one can also give upper bounds for solutions of the border return time Equation (\ref{borderreturntime1}) in many more cases, e.g. when all three
eigenvalues are real. However, we do not claim that any of these upper bounds are tight.

We also note that in case $g(t)$ has exactly one solution, the least positive solution can also be bounded below
by the (unique) extremum point $t^*$ of $g(t)$ given by
$$
t^*= \frac{\ln(-a_2\ln(\kappa_2)/a_1\ln(\kappa_1))}{\ln(\kappa_1/\kappa_2)}
$$

Note that this may or may not be a positive number. 

Now we can summarize the discussions above and state a necessary condition for the occurence of chaos through transverse homoclinic intersection on the 
first return of an unstable manifold $U_r$ for $R^*$, following a Shilnikov-type dynamical behaviour:

\begin{theorem}\label{3dpws_Shilnikov}
Suppose that the left-side fixed point $L^*$ is admissible and has eigenvalues $(\lambda_1, \lambda_2, \lambda_3)$ where $\lambda_1$ is real with $\lambda_1 >1$, and $\lambda_2= \overline{\lambda}_3$
is a pair of complex-conjugate eigenvalues with absolute value $r_0<1$. Assume also that the right-side fixed-point $R^*$ is admissible with one unstable flip eigenvalue and two stable eigenvalues; denote the
associated unstable manifold by $U_r$ and the stable manifold as $S_r$. Take
$X_0 = (0, y_0, z_0)^\intercal$ to be the intersection of $U_r$ with the border $x=0$ with $y_0+\mu <0$, and consider the corresponding function $f(t)$ of Equation (\ref{borderreturntime1})
(see also Remark (\ref{flip})), with upper and lower bounding curves $f^\pm(t)$, with either $f^+(t)$ or $f^-(t)$ having at least one solution (say $f^+(t)$).
Then, a necessary condition for a transverse homoclinic intersection on first return to occur between $U_r$ and $S_r$, is given by:
$$
((P_k-R^*)^\intercal\cdot \overline{n}_{S_r}). ((P_{k+1}-R^*)^\intercal\cdot\overline{n}_{S_r}) <0 \text{ for some } k \leq t_0+2 \text{ where }\\
t_0= \frac{\ln((2|\alpha_2|+|\alpha_3|)/|\alpha_1|)}{\ln(\lambda_1)}.
$$

Here $P_k$ is the $k$-th iterate of $X_0$, $\overline{n}_{S_r}$ is the normal vector to the 2-dimensional stable eigenplane of $R^*$ and  $v^\intercal\cdot w$ denotes the dot product of two vectors $v, w \in \mathbb{R}^3$(
$v^\intercal$ is the transpose of $v$).
\end{theorem}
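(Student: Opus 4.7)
The plan is to combine the geometric meaning of the signed dot--product condition with the return--time bound coming from Lemma (\ref{upper bound}), together with the two--iterate correction from Remark (\ref{flip}) that accounts for the flip nature of $U_r$.

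First I would unpack the iteration pattern near $X_0$. Since $X_0=(0,y_0,z_0)^\intercal$ lies on the flip unstable eigendirection of $R^*$, Remark (\ref{flip}) applies: $P_1=F_\mu(X_0)$ lands on $U_r$ on the right side of the border, $P_2=F_\mu(P_1)$ lands again on $U_r$ but on the left side (the hypothesis $y_0+\mu<0$ being what pins down this configuration after the flip), and from $P_2$ onwards the orbit is driven purely by $X\mapsto A_l X+\mu C$ until the next border crossing. In particular every iterate $P_k$ remains on the forward--time extension of $U_r$, and a transverse homoclinic intersection of $U_r$ with $S_r$ on first return to the right region must occur between two consecutive iterates $P_k, P_{k+1}$ that both lie on the right side immediately after that return.

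Next I would translate the geometric crossing into the displayed sign inequality. By construction $\overline{n}_{S_r}$ is normal to the affine plane $S_r$ through $R^*$, so $(P-R^*)^\intercal\cdot\overline{n}_{S_r}$ is a nonzero scalar multiple of the signed Euclidean distance from $P$ to $S_r$; the product of the two such signed distances at $P_k$ and $P_{k+1}$ being negative is equivalent to $P_k$ and $P_{k+1}$ lying in opposite open half--spaces of $S_r$. When both lie in the right region, the chord $[P_k, P_{k+1}]$ coincides with the relevant piece of $U_r$ (since the right--side restriction of $U_r$ is literally an eigenline of $A_r$ through $R^*$ and consecutive iterates on the right stay on this line), so the sign change is exactly the statement that $U_r$ crosses $S_r$ strictly between $P_k$ and $P_{k+1}$. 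Thus the sign inequality is necessary for a transverse homoclinic intersection on that first return.

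The third step is the upper bound $k\le t_0+2$. Applied from the point $P_2$ where the orbit first enters the left region, the complex interpolation of Subsection 4.2 produces the border--return equation $f(t)=0$ of the form in Equation (\ref{brt2}), with $\lambda_1>1$ and $r_0<1$ coming from the hypotheses on $A_l$. The envelopes $f^\pm(t)=\alpha_1\lambda_1^t\pm 2\alpha_2 r_0^t+\alpha_3$ satisfy $f^-(t)\le f(t)\le f^+(t)$, so Lemma (\ref{existence1}) bounds the least positive zero of $f$ by the largest positive zero of whichever envelope admits one (in our hypothesis, $f^+$). Applying Lemma (\ref{upper bound}) to that envelope with $(a_1,a_2,a_3,\kappa_1,\kappa_2)=(\alpha_1,\pm 2\alpha_2,\alpha_3,\lambda_1,r_0)$ produces the uniform bound
\[
t_0=\frac{\ln\bigl((2|\alpha_2|+|\alpha_3|)/|\alpha_1|\bigr)}{\ln(\lambda_1)}.
\]
Adding the two--iterate flip preamble from Remark (\ref{flip}), the first return iterate is at most $\floor{t_0}+2\le t_0+2$, so any $k$ witnessing the required sign change must satisfy $k\le t_0+2$.

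The main obstacle I expect is the geometric identification in the second paragraph: one must argue rigorously that a \emph{transverse} crossing of the one--dimensional $U_r$ with the two--dimensional plane $S_r$ is faithfully detected by the chord $[P_k, P_{k+1}]$ in the right half--space, and that no spurious sign--change event interferes while the orbit is still on the left (where $S_r$ is not dynamically meaningful as a stable set of $F_\mu$). In the piecewise linear setting of Equation (\ref{normalform}) this is clean because both $U_r$ and $S_r$ are literal eigenspaces of $A_r$ through $R^*$, so the chord criterion is exact as long as $P_k$ and $P_{k+1}$ lie on the right; the remaining ingredients are then a straightforward assembly of Lemmas (\ref{existence1}) and (\ref{upper bound}) with Remark (\ref{flip}).
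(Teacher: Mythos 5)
Your proposal takes essentially the same approach as the paper, which states this theorem as a summary of the preceding discussion rather than as a result with a standalone proof. You correctly assemble Remark (\ref{flip}) (for the $+2$ offset), Lemma (\ref{existence1}) (bounding the least positive zero of $f$ by the largest positive zero of whichever envelope admits one), and Lemma (\ref{upper bound}) with the substitution $(a_1,a_2,a_3,\kappa_1,\kappa_2)=(\alpha_1,\pm 2\alpha_2,\alpha_3,\lambda_1,r_0)$, which gives exactly the bound $t_0=\ln\bigl((2|\alpha_2|+|\alpha_3|)/|\alpha_1|\bigr)/\ln(\lambda_1)$; your geometric reading of the dot-product condition as a sign change across the plane $S_r$ also matches the paper's Subsection on the recursive method.

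Two cautions, neither of which is a gap you introduced since the paper is equally loose on both. First, your justification ``consecutive iterates on the right stay on this line'' holds only for iterates \emph{before} the first border exit; once the orbit has crossed to the left and returned, $P_k$ and $P_{k+1}$ lie on the global unstable manifold $W^u(R^*)$ but not on the eigenline $U_r$, and the piece of $W^u(R^*)$ between them is the $F_\mu$-image of a segment that straddled the border, so it may have a kink at the image of the border-crossing point. Thus the chord $[P_k,P_{k+1}]$ need not coincide with the corresponding piece of the manifold, and ``necessity'' of the sign-change condition is only heuristic; the paper makes the identical unqualified inference in Subsection 4.1 (``we infer that the unstable manifold must have gone through the stable manifold''). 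Second, if $X_0\in U_r\cap\{x=0\}$ with a flip unstable eigenvalue $\lambda_r<-1$ and $R^*$ admissible on the right ($x_{R^*}>0$), then $y_0+\mu = x_{P_1} = x_{R^*}(1-\lambda_r)>0$, so the hypothesis $y_0+\mu<0$ cannot hold simultaneously; your attempt to rationalize that hypothesis as ``pinning down the configuration after the flip'' therefore does not work. This tension is inherited from the theorem statement and Remark (\ref{flip}) in the paper itself and is not a defect of your argument, but it is worth flagging rather than explaining away.
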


We would like to apply Theorem (\ref{3dpws_Shilnikov}) to analyse our motivating example of Section 3. However, in that example, $R^*$ has a 1-dimensional \emph{flip stable} manifold $S_r$
and a 2-dimensional \emph{unstable} manifold $U_r$. In that case, our analysis must be modified as follows. Let $X_0 = (0, y_0, z_0)^\intercal$ be the point of intersection of $S_r$
with the border. Since $A_r$ and $A_l$ are invertible (none of its eigenvalues are zero so it is non-singular), the orbit of $X_0$ under the inverse map of $F_\mu$ can be computed easily, which we assume
to lie on the left side of the border. So, to analyse the ``border return time''  corresponding the backward flow of $S_r$ traversing the left side,
we can first modify the interpolation equation for the iterates of the inverse map and 
use a similar formula 
as given in Equation (\ref{complexinterpol}),  then 
analyse the zeros of the function analogous to $f(t)$. One gets in that case another function $G(t)$ which is also of the same
form as $g(t)$ as in Equation (\ref{generalexppol}), 
The rest of the analysis then follows in a similar fashion to find upper bounds of the least positive solution of $G(t)$.

\section{Conclusion}\label{conclusion}
 In the continuous case, Shilnikov bifurcation occurs because the 1-dimensional unstable
 manifold loops back and intersects the 2-dimensional stable manifold. We have shown that a similar phenomenon occurs in a discrete three-dimensional piecewise smooth system, albeit
 the looping of the 1-dimensional unstable manifold occurs due to the nature of the fixed point on the other side of the border.
 Therefore we call this Shilnikov-type dynamics and the resulting chaos as Shilnikov-type chaos.
 In this paper we have also derived analytical conditions for the occurrence of a transverse homoclinic intersection, and therefore the occurrence of a chaotic dynamics.
 It is well-known that such transverse homoclinic intersections cause chaotic behaviour via the presence of Smale
 horseshoes.
 More precisely, we employ two analytical methods to give necessary conditions for the occurrence of a homoclinic orbit: one that uses recursion and another that
 uses complex interpolation. The two methods are complementary to each other and are meant to
 be used together in practice. As far as we know, this method seems to be new in the case of discrete piecewise smooth systems.
 In particular, since the interpolation technique provides an 
 gives an explicitly-defined continuous curve that can be used to probe the dynamics of the system, we expect that it will be useful to study other dynamical properties of such systems as well.
 We also present an example, which shows a `two-sided Shilnikov dynamics', where looping behaviour of 1-dimensional
 eigenmanifolds occurs for both fixed points on either side of the border, which then intersect transversally their respective 2-dimensional eigenmanifolds. The resulting chaotic dynamics
 is Shilnikov-type on either side, which can be therefore aptly named as `two-sided Shilnikov chaos'.

%
%
\bibliographystyle{apalike}

\bibliography{Bibliography}
\end{document}